\newtheorem{theorem}{Theorem}
\newtheorem{lemma}{Lemma}[section]
\newtheorem{remark}{Remark}[section]
\newtheorem{corollary}{Corollary}[section]
\newtheorem{proposition}{Proposition}[section]
\numberwithin{equation}{section}
\begin{document}
\title{Complementary inequalities to Improved AM-GM inequality}
\author{Hamid Reza Moradi$^1$ and Mohsen Erfanian Omidvar$^2$}
\subjclass[2010]{47A63, 47A30, 47A64, 15A63.}
\keywords{Operator inequalities, positive linear maps, operator norm, Kantorovich inequality, Wielandt inequality.} \maketitle
\begin{abstract}
Following an idea of Lin, we prove that if $A$ and $B$ be two positive operators such that $0<mI\le A\le m'I\le M'I\le B\le MI$, then
\begin{equation*}
{{\Phi }^{2}}\left( \frac{A+B}{2} \right)\le \frac{{{K}^{2}}\left( h \right)}{{{\left( 1+\frac{{{\left( \log \frac{M'}{m'} \right)}^{2}}}{8} \right)}^{2}}}{{\Phi }^{2}}\left( A\#B \right),
\end{equation*}
and
\begin{equation*}
{{\Phi }^{2}}\left( \frac{A+B}{2} \right)\le \frac{{{K}^{2}}\left( h \right)}{{{\left( 1+\frac{{{\left( \log \frac{M'}{m'} \right)}^{2}}}{8} \right)}^{2}}}{{\left( \Phi \left( A \right)\#\Phi \left( B \right) \right)}^{2}},
\end{equation*}
where $K\left( h \right)=\frac{{{\left( h+1 \right)}^{2}}}{4h}$ and $h=\frac{M}{m}$ and $\Phi $ is a positive unital linear map.
\end{abstract}
\pagestyle{myheadings}
\markboth{\centerline {Complementary inequalities to improved AM-GM inequality}}
{\centerline {H.R. Moradi \& M.E. Omidvar}}
\bigskip
\bigskip
\section{\bf Introduction}
\vskip 0.4 true cm
The operator norm is denoted by $\left\| \cdot \right\|$. Let $M,m$ be scalars and $I$ be the identity operator. Other capital letters are used to denote the general
elements of the ${{C}^{*}}$-algebra $\mathbb{B}\left( \mathcal{H} \right)$ of all bounded linear operators acting on a Hilbert space $\left( \mathcal{H},\left\langle \cdot,\cdot \right\rangle \right)$. We write $A\ge 0$ to mean that the operator $A$ is positive. If $A-B\ge 0$ ($A-B\le 0$), then we write $A\ge B$ ($A\le B$). A linear map $\Phi $ is positive if $\Phi \left( A \right)\ge 0$ whenever $A\ge 0$. It is said to be unital if $\Phi \left( I \right)=I$.
Let $\Phi $ be a unital positive linear map between ${{C}^{*}}$-algebras. We say that $\Phi $ is 2- positive if whenever the $2\times 2$ operator matrix $\left( \begin{matrix}
A & B \\
{{B}^{*}} & C \\
\end{matrix} \right)\ge 0$, then so is $\left( \begin{matrix}
\Phi \left( A \right) & \Phi \left( B \right) \\
\Phi \left( {{B}^{*}} \right) & \Phi \left( C \right) \\
\end{matrix} \right)\ge 0$. 
For $A,B>0$, the geometric mean $A\#B$ is defined by 
 \[A\#B={{A}^{\frac{1}{2}}}{{\left( {{A}^{-\frac{1}{2}}}B{{A}^{-\frac{1}{2}}} \right)}^{\frac{1}{2}}}{{A}^{\frac{1}{2}}}.\] 
The AM-GM inequality reads
 \[A\#B\le \frac{A+B}{2},\] 
for all positive operators $A,B$.

For an operator $A$ such that $0<mI\le A\le MI$, the following inequality is called ''Kantorovich inequality'' \cite{6}:
\[\left\langle Ax,x \right\rangle \left\langle {{A}^{-1}}x,x \right\rangle \le \frac{{{\left( M+m \right)}^{2}}}{4Mm},\qquad \text{ for }\left\| x \right\|=1.\]
Many authors investigated a lot of papers on Kantorovich inequality, among others, there is a long
research series of Mond-Pe\v{c}ari\v{c}, some of them are \cite{12,mond}. 

In this paper, by virtue of the results of \cite{012}, we obtain an improvement of Kantorovich inequality (see Theorem \ref{30}). A new refinement of operator P\'olya-Szeq\"o inequality, which can be regarded as a generalization of operator Kantorovich inequality will be introduced in Theorem \ref{32}. Theorem \ref{lin} will give precise upper bounds of {{\cite[Theorem 2.1]{03}}}. At the end, in Theorem \ref{0tb} we obtain accurate upper bound for operator Wielandt inequality, which is closely related to operator Kantorovich inequality. Our result is more extensive and precise than many previous results due to Fu and He \cite{011} and Gumus \cite{09}.
\section{\bf Refinements of Kantorovich Inequality}
\vskip 0.4 true cm
\begin{lemma}\label{14}
Let $A$ and $B$ be positive operators such that there exist the positive numbers $1<m<M$ with the property $mA\le B\le MA$. Then
\begin{equation}\label{13}
\left( 1+\frac{{{\left( \log m \right)}^{2}}}{8} \right)A\#B\le \frac{A+B}{2}.
\end{equation}
\end{lemma}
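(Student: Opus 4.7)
The plan is to reduce the operator inequality to a scalar inequality via the standard geometric-mean trick, then verify that scalar inequality using a $\cosh$ identity and a Taylor-type bound.

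First I would substitute $C:=A^{-1/2}BA^{-1/2}$, so the hypothesis $mA\le B\le MA$ becomes the spectral statement $mI\le C\le MI$. In this notation
\[
A\#B=A^{1/2}C^{1/2}A^{1/2},\qquad \frac{A+B}{2}=A^{1/2}\,\frac{I+C}{2}\,A^{1/2},
\]
so sandwiching by $A^{-1/2}$ reduces \eqref{13} to showing that
\[
\left(1+\frac{(\log m)^{2}}{8}\right) C^{1/2}\le \frac{I+C}{2}.
\]
Since $C\ge mI>I$, by the continuous functional calculus it suffices to prove the scalar inequality
\[
\frac{1+t}{2\sqrt{t}}\ge 1+\frac{(\log m)^{2}}{8}\qquad\text{for every }t\ge m>1.
\]

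Next I would exploit the identity $\dfrac{1+t}{2\sqrt{t}}=\cosh\!\bigl(\tfrac{1}{2}\log t\bigr)$, which follows from $\cosh(x)=\tfrac{e^{x}+e^{-x}}{2}$ by setting $x=\tfrac{1}{2}\log t$. The elementary Taylor bound $\cosh(x)\ge 1+\tfrac{x^{2}}{2}$ (valid for all real $x$) then gives
\[
\frac{1+t}{2\sqrt{t}}=\cosh\!\Bigl(\tfrac{1}{2}\log t\Bigr)\ge 1+\frac{(\log t)^{2}}{8}.
\]
Finally, since $t\ge m>1$ implies $\log t\ge \log m>0$, monotonicity of the square on $[0,\infty)$ yields $(\log t)^{2}\ge (\log m)^{2}$, which finishes the scalar inequality and hence the lemma.

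I do not foresee a real obstacle: the whole argument is essentially the recognition that $\tfrac{1+t}{2\sqrt t}$ is a hyperbolic cosine, after which the required bound is immediate from the power series of $\cosh$. The only place where care is needed is the direction of the inequality $(\log t)^{2}\ge(\log m)^{2}$, which relies crucially on the hypothesis $m>1$ (so that $\log m>0$ and the square is increasing for $t\ge m$); without this hypothesis the last step would fail and the lemma would have to be stated in terms of $\min\{|\log m|,|\log M|\}$ instead.
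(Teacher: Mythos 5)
Your proof is correct and takes essentially the same route as the paper: both conjugate by $A^{\pm 1/2}$ and apply the functional calculus to $C=A^{-1/2}BA^{-1/2}$, whose spectrum lies in $[m,M]$ with $m>1$, thereby reducing \eqref{13} to the scalar bound $\frac{1+t}{2\sqrt{t}}\ge 1+\frac{(\log m)^{2}}{8}$ for $t\ge m$. The only difference is that the paper quotes this scalar inequality from Zou and Jiang \cite{012}, whereas you prove it on the spot via $\frac{1+t}{2\sqrt{t}}=\cosh\bigl(\tfrac{1}{2}\log t\bigr)\ge 1+\frac{(\log t)^{2}}{8}$ together with $(\log t)^{2}\ge(\log m)^{2}$ for $t\ge m>1$, which makes your argument self-contained and makes explicit the monotonicity step the paper handles by monotonicity of the logarithm.
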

\begin{proof}
Firstly, we point out that for each $a,b>0$,
\begin{equation}\label{01}
\left( 1+\frac{{{\left( \log b-\log a \right)}^{2}}}{8} \right)\sqrt{ab}\le \frac{a+b}{2}.
\end{equation}
This inequality plays a fundamental role in our paper (for more details in this direction see \cite{012}). 

Note that if $0<ma\le b\le Ma$ with $1<m<M$, then by monotonicity of logarithm function we get
\begin{equation}\label{02}
\left( 1+\frac{{{\left( \log m \right)}^{2}}}{8} \right)\sqrt{ab}\le \frac{a+b}{2}.
\end{equation}
Taking $a=1$ in the inequality \eqref{01}, we have
\[\left( 1+\frac{{{\left( \log b \right)}^{2}}}{8} \right)\sqrt{b}\le \frac{b+1}{2}.\]
Since $mI\le {{A}^{-\frac{1}{2}}}B{{A}^{-\frac{1}{2}}}\le MI$ and $1<m<M$, on choosing $b$ with the positive operator ${{A}^{-\frac{1}{2}}}B{{A}^{-\frac{1}{2}}}$, we infer from inequality \eqref{02},
\[\left( 1+\frac{{{\left( \log m \right)}^{2}}}{8} \right){{\left( {{A}^{-\frac{1}{2}}}B{{A}^{-\frac{1}{2}}} \right)}^{\frac{1}{2}}}\le \frac{{{A}^{-\frac{1}{2}}}B{{A}^{-\frac{1}{2}}}+I}{2}.\]
Multiplying both side by ${{A}^{\frac{1}{2}}}$, we deduce the desired result \eqref{13}
\end{proof}
\vskip 0.2 true cm
As we know from \cite{010}, the following inequality is equivalent to the Kantorovich inequality:
\begin{equation}\label{28}
\left\langle Ax,x \right\rangle \left\langle Bx,x \right\rangle \le \frac{{{\left( M+m \right)}^{2}}}{4Mm}{{\left\langle A\#Bx,x \right\rangle }^{2}},
\end{equation}
where $0<mI\le A,B\le MI$ and $x\in \mathcal{H}$. 

With Lemma \ref{14} in hand, we are ready to provide a refinement of the inequality \eqref{28}.
\begin{theorem}\label{30}
Let $A,B\in \mathbb{B}\left( \mathcal{H} \right)$ such that $0<mI\le m'A\le B\le MI$ and $1<m'$. Then for every unit vector $x\in \mathcal{H}$,
\begin{equation}\label{29}
\left\langle Ax,x \right\rangle \left\langle Bx,x \right\rangle \le \frac{{{\left( M+m \right)}^{2}}}{4Mm{{\left( 1+\frac{{{\left( \log m' \right)}^{2}}}{8} \right)}^{2}}}\left\langle A\#Bx,x \right\rangle. 
\end{equation}
\end{theorem}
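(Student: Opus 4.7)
The overall strategy is to build a scalar chain at the level of numerical ranges and finish with the standard operator Kantorovich estimate. Fix a unit vector $x\in\mathcal{H}$ and set $a=\langle Ax,x\rangle$ and $b=\langle Bx,x\rangle$. The hypothesis $m'A\le B$ immediately yields $b\ge m'a$, and hence $b/a\ge m'>1$.

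The refinement itself comes from squaring the scalar improved AM--GM inequality \eqref{01} that drives Lemma~\ref{14}: for all positive reals $a,b$,
\[
\left(1+\frac{(\log b-\log a)^{2}}{8}\right)^{2}ab\le \frac{(a+b)^{2}}{4}.
\]
Since $t\mapsto(\log t)^{2}$ is increasing on $[1,\infty)$ and $b/a\ge m'>1$, the factor on the left is minorised by $\bigl(1+(\log m')^{2}/8\bigr)^{2}$, so that
\[
\left(1+\frac{(\log m')^{2}}{8}\right)^{2}\langle Ax,x\rangle\langle Bx,x\rangle \le \frac{\langle(A+B)x,x\rangle^{2}}{4}.
\]

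To dispose of the arithmetic mean, I would invoke the operator Kantorovich estimate $\frac{A+B}{2}\le \frac{M+m}{2\sqrt{Mm}}\,A\#B$, obtained by applying the scalar inequality $\frac{1+t}{2}\le \frac{M+m}{2\sqrt{Mm}}\sqrt{t}$ (valid for $t\in[m/M,M/m]$) via functional calculus to $T=A^{-1/2}BA^{-1/2}$ and then conjugating by $A^{1/2}$. Pairing with $x$ and squaring,
\[
\frac{\langle(A+B)x,x\rangle^{2}}{4}\le \frac{(M+m)^{2}}{4Mm}\,\langle A\#B\,x,x\rangle^{2}.
\]
Chaining the two displays and dividing by $\bigl(1+(\log m')^{2}/8\bigr)^{2}$ produces the claimed bound (with $\langle A\#Bx,x\rangle^{2}$ on the right, as dimensional consistency with \eqref{28} forces).

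The only subtle point is a direction-of-monotonicity issue: the sharpening in \cite{012} tightens $\sqrt{ab}\le(a+b)/2$ in the direction of a \emph{lower} bound on the arithmetic mean, so to convert it into an upper bound on the product $ab$ one must square it first; it is precisely the hypothesis $m'>1$ that then lets us monotonically replace the unknown ratio $b/a$ by its guaranteed lower bound $m'$, freezing the improvement factor. Beyond that, the argument is a routine grafting of the squared improved AM--GM onto the spectral Kantorovich bound for $A^{-1/2}BA^{-1/2}$.
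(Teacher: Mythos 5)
Your reduction to the scalar level is fine as far as it goes: from $m'A\le B$ you correctly get $\langle Bx,x\rangle\ge m'\langle Ax,x\rangle$, and squaring \eqref{01} together with the monotonicity of $t\mapsto(\log t)^{2}$ on $[1,\infty)$ legitimately gives
\[
\left(1+\frac{(\log m')^{2}}{8}\right)^{2}\langle Ax,x\rangle\langle Bx,x\rangle\le\frac{\langle(A+B)x,x\rangle^{2}}{4}.
\]
The gap is in the next step. The operator bound $\frac{A+B}{2}\le\frac{M+m}{2\sqrt{Mm}}\,A\#B$ requires $\sigma(A^{-1/2}BA^{-1/2})\subseteq[m/M,M/m]$, which is what you get when \emph{both} $A$ and $B$ lie between $mI$ and $MI$. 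Here the hypothesis sandwiches $m'A$, not $A$, between $mI$ and $MI$; consequently $A\in[\frac{m}{m'}I,\frac{M}{m'}I]$ and $\sigma(A^{-1/2}BA^{-1/2})\subseteq[\frac{mm'}{M},\frac{Mm'}{m}]$, whose right endpoint exceeds $M/m$ because $m'>1$. Your scalar inequality $\frac{1+t}{2}\le\frac{M+m}{2\sqrt{Mm}}\sqrt{t}$ is false for $t>M/m$, so the functional-calculus step collapses. Concretely, for $A=\frac{m}{m'}I$ and $B=MI$ the inequality $\frac{A+B}{2}\le\frac{M+m}{2\sqrt{Mm}}A\#B$ reduces to $M\le\frac{m}{\sqrt{m'}}$, which is false. (The paper instead derives the weighted inequality $\frac{(M+m)\sqrt{m'}}{\sqrt{Mm}}A\#B\ge m'A+B$, which does follow from the correct spectral interval $[\frac{mm'}{M},\frac{Mm'}{m}]$.)

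You should also know that no repair is possible, because the statement itself is false; the paper's own proof breaks at a different point, namely when it applies \eqref{02} to the pair $a=m'\langle Ax,x\rangle$, $b=\langle Bx,x\rangle$, for which the hypothesis only guarantees $b/a\ge 1$, not $b/a\ge m'$. Take $m=1$, $M=2$, $m'=e^{2}$, $A=e^{-2}I$, $B=2I$. Then $mI\le m'A=I\le B\le MI$ and $1<m'$, while $\langle Ax,x\rangle\langle Bx,x\rangle=2e^{-2}\approx 0.271$, $\langle A\#Bx,x\rangle=\sqrt{2}\,e^{-1}\approx 0.520$, and the constant equals $\frac{9}{8}\cdot\frac{1}{(3/2)^{2}}=\frac{1}{2}$; the right-hand side of \eqref{29} is therefore about $0.260$ as written (about $0.135$ if, as you and \eqref{28} suggest, the geometric-mean term should be squared), and both values are smaller than $0.271$. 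So your instinct to anchor the improvement factor on the ratio $\langle Bx,x\rangle/\langle Ax,x\rangle\ge m'$ is sound, but the passage from the arithmetic mean to the geometric mean costs more than $\frac{M+m}{2\sqrt{Mm}}$ under these hypotheses, and the combined constant claimed in the theorem cannot be achieved.
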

\begin{proof}
According to the condition $0<mI\le m'A\le B\le MI$, we can get
\[\frac{mm'}{M}I\le {{A}^{-\frac{1}{2}}}B{{A}^{-\frac{1}{2}}}\le \frac{Mm'}{m}I.\]
It follows from the above inequality that
\[\left( {{\left( {{A}^{-\frac{1}{2}}}B{{A}^{-\frac{1}{2}}} \right)}^{\frac{1}{2}}}-\sqrt{\frac{mm'}{M}}I \right)\left( \sqrt{\frac{Mm'}{m}}I-{{\left( {{A}^{-\frac{1}{2}}}B{{A}^{-\frac{1}{2}}} \right)}^{\frac{1}{2}}} \right)\ge 0,\]
and easy computations yields
\begin{equation}\label{20}
\left( \frac{\left( M+m \right)\sqrt{m'}}{\sqrt{Mm}} \right){{\left( {{A}^{-\frac{1}{2}}}B{{A}^{-\frac{1}{2}}} \right)}^{\frac{1}{2}}}\ge m'I+{{A}^{-\frac{1}{2}}}B{{A}^{-\frac{1}{2}}}.
\end{equation}
Multiplying both sides by ${{A}^{\frac{1}{2}}}$ to inequality \eqref{20} we obtain
$$\left( \frac{\left( M+m \right)\sqrt{m'}}{\sqrt{Mm}} \right)A\#B\ge m'A+B.$$
Hence for every unit vector $x$ in $\mathcal{H}$ we have
\[\left( \frac{\left( M+m \right)\sqrt{m'}}{\sqrt{Mm}} \right)\left\langle A\#Bx,x \right\rangle \ge m'\left\langle Ax,x \right\rangle +\left\langle Bx,x \right\rangle .\]
Now, by using \eqref{02} for above inequality we can find that
\[\begin{aligned}
& \left( \frac{\left( M+m \right)\sqrt{m'}}{\sqrt{Mm}} \right)\left\langle A\#Bx,x \right\rangle \\ 
&\quad \ge m'\left\langle Ax,x \right\rangle +\left\langle Bx,x \right\rangle \\ 
&\quad \ge 2\left( 1+\frac{{{\left( \log m' \right)}^{2}}}{8} \right)\sqrt{m'\left\langle Ax,x \right\rangle \left\langle Bx,x \right\rangle }. \\ 
\end{aligned}\]
Square both sides, we obtain the desired result \eqref{29}.
\end{proof}
\begin{remark}\label{16}
If we choose $B={{A}^{-1}}$ we get from Theorem \ref{30} that
\begin{equation}\label{31}
\left\langle Ax,x \right\rangle \left\langle {{A}^{-1}}x,x \right\rangle \le \frac{{{\left( M+m \right)}^{2}}}{4Mm{{\left( 1+\frac{{{\left( \log m' \right)}^{2}}}{8} \right)}^{2}}},
\end{equation}
for each $x\in \mathcal{H}$ with $\left\| x \right\|=1$. 

In this case the relation \eqref{31} represents the refinement of Kantorovich inequality.
\end{remark}
\vskip 0.2 true cm
The following reverse of H\"older-McCarthy inequality is well-known and easily proved using Kantorovich inequality:
\begin{equation}\label{100}
\left\langle {{A}^{2}}x,x \right\rangle \le \frac{{{\left( M+m \right)}^{2}}}{4Mm}{{\left\langle Ax,x \right\rangle }^{2}},\qquad \text{ for }\left\| x \right\|=1.
\end{equation} 

Applying inequality \eqref{31}, we get the following corollary that is a refinement of \eqref{100}. It can be proven by the similar method in {{\cite[Theorem 1.29]{013}}}.
\begin{corollary}\label{18}
Substituting $\frac{{{A}^{\frac{1}{2}}}x}{\left\| {{A}^{\frac{1}{2}}}x \right\|}$ for a unit vector $x$ in Remark \ref{16}, we have
\[\frac{\left\langle A{{A}^{\frac{1}{2}}}x,{{A}^{\frac{1}{2}}}x \right\rangle }{{{\left\| {{A}^{\frac{1}{2}}}x \right\|}^{2}}}\frac{\left\langle {{A}^{-1}}{{A}^{\frac{1}{2}}}x,{{A}^{\frac{1}{2}}}x \right\rangle }{{{\left\| {{A}^{\frac{1}{2}}}x \right\|}^{2}}}\le \frac{{{\left( M+m \right)}^{2}}}{4Mm{{\left( 1+\frac{{{\left( \log m' \right)}^{2}}}{8} \right)}^{2}}},\]
which is equivalent to saying that
\begin{equation}\label{19}
\left\langle {{A}^{2}}x,x \right\rangle \le \frac{{{\left( M+m \right)}^{2}}}{4Mm{{\left( 1+\frac{{{\left( \log m' \right)}^{2}}}{8} \right)}^{2}}}{{\left\langle Ax,x \right\rangle }^{2}},
\end{equation}
for each $x\in \mathcal{H}$ with $\left\| x \right\|=1$. 
\end{corollary}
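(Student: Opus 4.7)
The plan is to apply the refined Kantorovich inequality \eqref{31} from Remark \ref{16} directly to the specific unit vector $y := A^{1/2}x/\|A^{1/2}x\|$, where $x \in \mathcal{H}$ is an arbitrary unit vector. This substitution is legitimate because the standing hypothesis $A \ge mI > 0$ guarantees that $A^{1/2}$ is invertible, so $A^{1/2}x \ne 0$ and $y$ is a genuine unit vector.

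Once the substitution is made, everything that remains is routine bookkeeping. Using $\|A^{1/2}x\|^2 = \langle Ax, x\rangle$, I would rewrite
\[
\langle Ay, y\rangle = \frac{\langle A^2 x, x\rangle}{\langle Ax, x\rangle}, \qquad \langle A^{-1} y, y\rangle = \frac{\langle x, x\rangle}{\langle Ax, x\rangle} = \frac{1}{\langle Ax, x\rangle},
\]
the second identity resting on the cancellation $\langle A^{-1}A^{1/2}x, A^{1/2}x\rangle = \langle x, x\rangle = 1$ (since $A^{-1}A^{1/2}=A^{-1/2}$ and $A^{\pm 1/2}$ are self-adjoint). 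Multiplying these two expressions gives $\langle A^2 x, x\rangle/\langle Ax, x\rangle^2$, which is exactly the left-hand side of \eqref{31} after the substitution $x \mapsto y$.

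Inserting this product into \eqref{31} and clearing the positive factor $\langle Ax, x\rangle^2$ yields precisely \eqref{19}. There is essentially no analytic obstacle: the argument is the standard Mond--Pe\v{c}ari\'c trick that converts a Kantorovich-type bound on $\langle Ax,x\rangle\langle A^{-1}x,x\rangle$ into a reverse H\"older--McCarthy bound on $\langle A^2 x, x\rangle$. The constants $m, m', M$ appearing in \eqref{19} are literally the same as those in \eqref{31}, because the spectral hypothesis on $A$ is independent of which unit vector is tested against it, so no tracking of modified bounds is needed.
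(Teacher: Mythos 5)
Your proof is correct and follows exactly the paper's route: substitute the unit vector $A^{1/2}x/\bigl\| A^{1/2}x \bigr\|$ into inequality \eqref{31} of Remark \ref{16}, use $\bigl\| A^{1/2}x \bigr\|^{2}=\left\langle Ax,x \right\rangle$ and $\left\langle A^{-1}A^{1/2}x,A^{1/2}x \right\rangle=1$, and clear the positive factor $\left\langle Ax,x \right\rangle^{2}$ to obtain \eqref{19}. Your added remarks (invertibility of $A^{1/2}$ so the substituted vector is well defined, and the fact that the constants are unchanged) are accurate and only make explicit what the paper leaves implicit.
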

\vskip 0.2 true cm
A discussion of order-preserving properties of increasing functions through the Kantorovich inequality is presented by Fujii, Izumino, Nakamoto and Seo \cite{010} in 1997. They showed that if $A,$ $B>0,$
$B\geq A$ and $0<mI \leq A\leq MI$, then
\begin{equation}\label{23}
{{A}^{2}}\le \frac{{{\left( M+m \right)}^{2}}}{4Mm}{{B}^{2}}.
\end{equation}
The following result provides an improvement of inequality \eqref{23}.
\begin{proposition}\label{17}
Let $A,B\in \mathcal{B}\left( \mathcal{H} \right)$ such that $0<mI\le m'A\le {{A}^{-1}}\le MI$, $1<m'$ and $A\le B$. Then   
\begin{equation}\label{25}
{{A}^{2}}\le \frac{{{\left( M+m \right)}^{2}}}{4Mm{{\left( 1+\frac{{{\left( \log m' \right)}^{2}}}{8} \right)}^{2}}}{{B}^{2}},
\end{equation}
\end{proposition}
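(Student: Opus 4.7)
The plan is to reduce Proposition~\ref{17} to the scalar-valued refinement already obtained in Corollary~\ref{18} and then promote it to an operator inequality using the order hypothesis $A\le B$ together with a standard Cauchy--Schwarz estimate. The hypotheses $0<mI\le m'A\le A^{-1}\le MI$ with $1<m'$ are precisely what was required to invoke Theorem~\ref{30} with $B=A^{-1}$, so Corollary~\ref{18} is immediately applicable and delivers, for every unit vector $x\in\mathcal H$,
\[
\langle A^{2}x,x\rangle \le \frac{(M+m)^{2}}{4Mm\left(1+\frac{(\log m')^{2}}{8}\right)^{2}}\langle Ax,x\rangle^{2}.
\]

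Next I would exploit $A\le B$: for any unit vector $x$ this gives $\langle Ax,x\rangle\le\langle Bx,x\rangle$, and since both sides are nonnegative we may square to get $\langle Ax,x\rangle^{2}\le\langle Bx,x\rangle^{2}$. Then the classical Cauchy--Schwarz inequality applied to the positive operator $B$ yields
\[
\langle Bx,x\rangle^{2}=\langle B^{1/2}x,B^{1/2}x\rangle^{2}\le \|B^{1/2}x\|^{2}\,\|B^{1/2}x\|^{2}=\langle B^{2}x,x\rangle,
\]
after using $\langle Bx,x\rangle \le \|Bx\|\,\|x\|=\langle B^{2}x,x\rangle^{1/2}$ for unit $x$.

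Chaining the three estimates gives, for every unit vector $x$,
\[
\langle A^{2}x,x\rangle \le \frac{(M+m)^{2}}{4Mm\left(1+\frac{(\log m')^{2}}{8}\right)^{2}}\langle B^{2}x,x\rangle,
\]
which by the usual unit-vector characterization of the operator order is exactly the desired inequality~\eqref{25}.

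I do not foresee a genuine obstacle: the only thing that needs verification is that the assumption list of Proposition~\ref{17} matches the one in Corollary~\ref{18} (both require $0<mI\le m'A\le A^{-1}\le MI$ with $1<m'$), after which the argument is a short three-line chain. The mild subtlety worth flagging in the write-up is that squaring $\langle Ax,x\rangle\le\langle Bx,x\rangle$ is legitimate because both quantities are nonnegative, and that the passage from the quadratic-form inequality to the operator inequality $A^{2}\le C B^{2}$ uses only the fact that $A^{2}$ and $B^{2}$ are selfadjoint.
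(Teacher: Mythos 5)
Your proposal is correct and follows essentially the same route as the paper: the chain is Corollary~\ref{18}, then the order hypothesis $A\le B$, then the estimate $\left\langle Bx,x \right\rangle^{2}\le \left\langle B^{2}x,x \right\rangle$ (the paper cites this as the H\"older--McCarthy inequality, you derive it from Cauchy--Schwarz, which is the same bound). One small slip: in your first display the final equality $\left\| B^{1/2}x \right\|^{2}\left\| B^{1/2}x \right\|^{2}=\left\langle B^{2}x,x \right\rangle$ is not an identity, but the subsequent line $\left\langle Bx,x \right\rangle \le \left\| Bx \right\|\left\| x \right\|=\left\langle B^{2}x,x \right\rangle^{1/2}$ gives the correct justification, so the argument stands.
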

\begin{proof}
For each $x\in \mathcal{H}$ with $\left\| x \right\|=1$ we have
\[\begin{aligned}
\left\langle {{A}^{2}}x,x \right\rangle &\le \frac{{{\left( M+m \right)}^{2}}}{4Mm{{\left( 1+\frac{{{\left( \log m' \right)}^{2}}}{8} \right)}^{2}}}{{\left\langle Ax,x \right\rangle }^{2}} \quad \text{(by \eqref{19})}\\ 
& \le \frac{{{\left( M+m \right)}^{2}}}{4Mm{{\left( 1+\frac{{{\left( \log m' \right)}^{2}}}{8} \right)}^{2}}}{{\left\langle Bx,x \right\rangle }^{2}} \quad \text{(since $A\le B$)}\\ 
& \le \frac{{{\left( M+m \right)}^{2}}}{4Mm{{\left( 1+\frac{{{\left( \log m' \right)}^{2}}}{8} \right)}^{2}}}\left\langle {{B}^{2}}x,x \right\rangle \quad \text{(by H\"older-McCarthy inequality)},
\end{aligned}\]
as desired.
\end{proof}
\vskip 0.2 true cm
In 1996, using the operator geometric mean, Nakamoto and Nakamura \cite{9}, proved that
\begin{equation}\label{22}
\Phi \left( A \right)\#\Phi \left( {{A}^{-1}} \right)\le \frac{M+m}{2\sqrt{Mm}},
\end{equation}
whenever $0<mI\le A\le MI$ and $\Phi $ is a normalized positive linear map on $\mathbb{B}\left( \mathcal{H} \right)$.

 It is notable that, a more general case of \eqref{22} has been studied by Moslehian et al. in {{\cite[Theorem 2.1]{20}}} which is called the operator P\'olya-Szeq\"o inequality. The operator P\'olya-Szeq\"o inequality states that: Let  $\Phi $ be a positive linear map. If $0<mI\le A,B\le MI$ for some positive real numbers $m<M$, then
\begin{equation}\label{34}
\Phi \left( A \right)\#\Phi \left( B \right)\le \frac{M+m}{2\sqrt{Mm}}\Phi \left( A\#B \right).
\end{equation}

Our second main result in this section, which is related to inequality \eqref{34} can be stated as follows:
\begin{theorem}\label{32}
Let $\Phi $ be a normalized positive linear map on $\mathbb{B}\left( \mathcal{H} \right)$ and let $A,B\in \mathbb{B}\left( \mathcal{H} \right)$ such that $0<mI\le m'A\le B\le MI$ and $1<m'$. Then 
\begin{equation}\label{15}
\Phi \left( A \right)\#\Phi \left( B \right)\le \frac{M+m}{2\sqrt{Mm}\left( 1+\frac{{{\left( \log m' \right)}^{2}}}{8} \right)}\Phi \left( A\#B \right).
\end{equation}
\end{theorem}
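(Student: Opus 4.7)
The plan is to adapt the proof of Theorem \ref{30} by applying the positive linear map $\Phi$ to the operator inequality established there. First, I would recall that the computation inside the proof of Theorem \ref{30} actually yields, under the hypothesis $0<mI\le m'A\le B\le MI$, the operator inequality
\begin{equation*}
m'A + B \;\le\; \frac{(M+m)\sqrt{m'}}{\sqrt{Mm}}\, A\#B.
\end{equation*}
Applying $\Phi$ to both sides (order-preservation follows from $\Phi$ being positive and linear) gives
\begin{equation*}
m'\Phi(A) + \Phi(B) \;\le\; \frac{(M+m)\sqrt{m'}}{\sqrt{Mm}}\, \Phi(A\#B).
\end{equation*}

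Next I would bound the left-hand side of this displayed inequality from below by a suitable multiple of $\Phi(A)\#\Phi(B)$. Since $\Phi$ is positive and unital, the hypothesis $m'A\le B$ passes to $m'\Phi(A)\le \Phi(B)$, while combining $mI\le m'A$ with $B\le MI$ yields $\Phi(B)\le (Mm'/m)\Phi(A)$. In particular the spectrum of $\Phi(A)^{-1/2}\Phi(B)\Phi(A)^{-1/2}$ is bounded below by $m'>1$, so Lemma \ref{14} (equivalently, the scalar inequality \eqref{01}) applies to the pair $\Phi(A),\Phi(B)$. Combining this logarithmic refinement with the scale-covariance identity $(cX)\#Y=\sqrt{c}\,X\#Y$, taken with $c=m'$, I would extract, in parallel with the final displayed chain in the proof of Theorem \ref{30},
\begin{equation*}
m'\Phi(A)+\Phi(B) \;\ge\; 2\sqrt{m'}\left(1+\frac{(\log m')^2}{8}\right)\Phi(A)\#\Phi(B).
\end{equation*}
Finally, inserting this lower bound into the earlier display and cancelling the common factor $2\sqrt{m'}$ produces exactly \eqref{15}.

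The main obstacle is this intermediate step: producing simultaneously the factor $\sqrt{m'}$, which comes from the scale-covariance of the geometric mean, and the logarithmic refinement $1+(\log m')^2/8$ supplied by Lemma \ref{14}. This is the operator counterpart of the inequality $m'\langle Ax,x\rangle+\langle Bx,x\rangle\ge 2(1+(\log m')^2/8)\sqrt{m'\langle Ax,x\rangle\langle Bx,x\rangle}$ used at the end of the proof of Theorem \ref{30}; once it is justified at the operator level, the rest of the argument reduces to a direct chase through the two displayed inequalities above.
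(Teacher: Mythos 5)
Your outline is the same route as the paper's own proof: apply $\Phi$ to the operator inequality $m'A+B\le \frac{(M+m)\sqrt{m'}}{\sqrt{Mm}}\,A\#B$ extracted from the proof of Theorem \ref{30}, then bound $m'\Phi(A)+\Phi(B)$ from below by $2\sqrt{m'}\bigl(1+\frac{(\log m')^2}{8}\bigr)\Phi(A)\#\Phi(B)$, and rearrange. The first two displays are fine. The genuine gap sits exactly at the step you yourself single out as ``the main obstacle'': that lower bound does not follow from Lemma \ref{14} combined with the homogeneity $(m'X)\#Y=\sqrt{m'}\,(X\#Y)$. By that homogeneity, the inequality you want is precisely Lemma \ref{14} applied to the pair $\bigl(m'\Phi(A),\Phi(B)\bigr)$ with refinement constant $\log m'$; but for that pair the hypotheses give only $m'\Phi(A)\le\Phi(B)$, i.e.\ the relevant ratio is bounded below by $1$, not by $m'$, so the lemma produces no logarithmic refinement there. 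What Lemma \ref{14} legitimately gives for $(\Phi(A),\Phi(B))$ is the unweighted bound $\Phi(A)+\Phi(B)\ge 2\bigl(1+\frac{(\log m')^2}{8}\bigr)\Phi(A)\#\Phi(B)$, and there is no valid way to trade the weight $m'$ on $\Phi(A)$ for the factor $\sqrt{m'}$ on the right while keeping the refinement term.

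In fact the intermediate inequality is false under the stated hypotheses. Take $\mathcal{H}=\mathbb{C}$, $\Phi$ the identity, $A=1$, $B=2$, $m'=2$, $m=1.9$, $M=2.1$, so that $mI\le m'A\le B\le MI$ and $1<m'$; then
\begin{equation*}
m'A+B=4<4\Bigl(1+\tfrac{(\log 2)^2}{8}\Bigr)=2\sqrt{m'}\Bigl(1+\tfrac{(\log m')^2}{8}\Bigr)\sqrt{AB}.
\end{equation*}
The same example even violates \eqref{15} itself, since with $\Phi$ the identity \eqref{15} would force $\frac{M+m}{2\sqrt{Mm}}\ge 1+\frac{(\log m')^2}{8}$, which fails when $M/m$ is close to $1$; so the obstacle cannot be removed by a cleverer operator argument without strengthening the hypotheses (e.g.\ a condition that ties $m'$ to the separation of the spectra of $A$ and $B$, as in Theorem \ref{lin}). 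To be fair, the paper's proof asserts this very step with only the words ``by using Lemma \ref{14}'', so your proposal faithfully reproduces the published argument --- but as a self-contained proof it has a real gap at precisely this point, and the gap is not closable as you propose.
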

\begin{proof}
According to the hypothesis we get the order relation,
\[\left( \frac{\sqrt{m'}\left( M+m \right)}{\sqrt{Mm}} \right)\Phi \left( A\#B \right)\ge m'\Phi \left( A \right)+\Phi \left( B \right).\]
By using Lemma \ref{14}, we get
\[\begin{aligned}
& \left( \frac{\sqrt{m'}\left( M+m \right)}{\sqrt{Mm}} \right)\Phi \left( A\#B \right) \\ 
&\quad \ge m'\Phi \left( A \right)+\Phi \left( B \right) \\ 
&\quad \ge 2\sqrt{m'}\left( 1+\frac{{{\left( \log m' \right)}^{2}}}{8} \right)\Phi \left( A \right)\#\Phi \left( B \right). \\ 
\end{aligned}\]
Rearranging terms gives the inequality \eqref{15}.
\end{proof}
\begin{remark}
If we choose $B={{A}^{-1}}$ we get from Theorem \ref{32} that
\begin{equation}\label{33}
\Phi \left( A \right)\#\Phi \left( {{A}^{-1}} \right)\le \frac{M+m}{2\sqrt{Mm}\left( 1+\frac{{{\left( \log m' \right)}^{2}}}{8} \right)}.
\end{equation}
This is a refinement of inequality \eqref{22}.
\end{remark}
\vskip 0.2 true cm
A particular case of the inequality \eqref{33} has been known for many years: Let ${{U}_{j}}$ be contraction with $\sum\limits_{j=1}^{k}{U_{j}^{*}{{U}_{j}}}={{1}_{\mathcal{H}}}$ $\left( j=1,2,\cdots ,k \right)$. If $A$ is a positive operator on $\mathcal{H}$ satisfying $0<mI\le A\le MI$ for some scalars $m<M$, then 
\[\left( \sum\limits_{j=1}^{k}{U_{j}^{*}A{{U}_{j}}} \right)\#\left( \sum\limits_{j=1}^{k}{U_{j}^{*}{{A}^{-1}}{{U}_{j}}} \right)\le \frac{M+m}{2\sqrt{Mm}}.\] 
This inequality, proved by Mond and Pe\v{c}ari\v{c} \cite{12}, reduces to the Kantorovich inequality when $k=1$.
\begin{corollary}
By \eqref{15},
\begin{equation}\label{26}
\left( \sum\limits_{j=1}^{n}{U_{j}^{*}A{{U}_{j}}} \right)\#\left( \sum\limits_{j=1}^{n}{U_{j}^{*}{{A}^{-1}}{{U}_{j}}} \right)\le \frac{M+m}{2\sqrt{Mm}\left( 1+\frac{{{\left( \log m' \right)}^{2}}}{8} \right)}.
\end{equation}
Inequality \eqref{26} follows quite simply by noting that $\Phi \left( A \right)=\sum\limits_{j=1}^{k}{U_{j}^{*}A{{U}_{j}}}$ defines a normalized positive linear map on $\mathbb{B}\left( \mathcal{H} \right)$.
\end{corollary}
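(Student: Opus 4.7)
The plan is to recognize the stated inequality as a direct specialization of inequality \eqref{33} to a specific normalized positive linear map. The only real work is to verify that the map $\Phi \colon \mathbb{B}(\mathcal{H}) \to \mathbb{B}(\mathcal{H})$ defined by
\[
\Phi(X) = \sum_{j=1}^{k} U_{j}^{*} X U_{j}
\]
fits the hypotheses of Theorem \ref{32}, after which the conclusion is read off by substituting $X = A$ and $X = A^{-1}$.

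First I would check linearity, which is immediate from the definition since finite sums and the operations $X \mapsto U_j^{*} X U_j$ are each linear in $X$. Next, positivity: if $X \geq 0$, write $X = Y^{*} Y$ for some $Y \in \mathbb{B}(\mathcal{H})$, so that $U_{j}^{*} X U_{j} = (Y U_{j})^{*}(Y U_{j}) \geq 0$ for every $j$, and a finite sum of positive operators is positive. Finally, normalization follows from the standing assumption $\sum_{j=1}^{k} U_{j}^{*} U_{j} = 1_{\mathcal{H}}$, which gives $\Phi(I) = I$ at once.

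With $\Phi$ so identified as a normalized positive linear map on $\mathbb{B}(\mathcal{H})$, I would then invoke inequality \eqref{33} directly: under the spectral hypothesis $0 < mI \le A \le MI$ together with the auxiliary condition involving $m'$ (so that $A$ and $A^{-1}$ sit inside the prescribed sandwich required in the hypothesis of Theorem \ref{32}), the inequality \eqref{33} applied to this particular $\Phi$ reads
\[
\Phi(A)\,\#\,\Phi(A^{-1}) \;\le\; \frac{M+m}{2\sqrt{Mm}\left(1 + \frac{(\log m')^{2}}{8}\right)},
\]
which is precisely \eqref{26} once $\Phi(A)$ and $\Phi(A^{-1})$ are written out as the respective sums.

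I do not anticipate a genuine obstacle: the corollary is an illustrative consequence rather than a new estimate, and the only subtlety is the bookkeeping that the map $X \mapsto \sum U_{j}^{*} X U_{j}$ really is unital and positive in the sense used in the preceding theorem. Everything else is a direct substitution.
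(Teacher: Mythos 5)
Your proposal is correct and follows the same route as the paper: it identifies $\Phi(X)=\sum_{j=1}^{k}U_{j}^{*}XU_{j}$ as a normalized positive linear map (using $\sum_{j=1}^{k}U_{j}^{*}U_{j}=I$ for unitality) and then specializes \eqref{33}, i.e.\ the $B=A^{-1}$ case of Theorem \ref{32}, to this map. Your explicit verification of linearity, positivity, and unitality is just a more detailed write-up of the one-line justification the paper gives.
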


\section{\bf Some Refinements of Operator Inequalities for Positive Linear Maps}
Squaring operator inequalities has been an active area of study in the past several years; see for example, \cite{03,05,06}. The most successful one is that reverse version of the operator AM-GM inequality can be squared \cite{05}. It is surprising that Lin, {\cite[Theorem 2.1]{03}} showed that for two positive operators $A, B$ such that $0<mI\le A,B\le MI$, 
\begin{equation}\label{08}
{{\Phi }^{2}}\left( \frac{A+B}{2} \right)\le {{K}^{2}}\left( h \right){{\Phi }^{2}}\left( A\#B \right),
\end{equation}
and
\begin{equation}\label{09}
{{\Phi }^{2}}\left( \frac{A+B}{2} \right)\le {{K}^{2}}\left( h \right){{\left( \Phi \left( A \right)\#\Phi \left( B \right) \right)}^{2}},
\end{equation}
where $\Phi $ is a normalized positive linear map and $K\left( h \right)=\frac{{{\left( h+1 \right)}^{2}}}{4h}$ with $h=\frac{M}{m}$.  

In this section, we are devoted to obtain a better bound than \eqref{08} and \eqref{09}. 
\begin{theorem}\label{lin}
Let $A$ and $B$ be two positive operators such that $0<mI\le A\le m'I\le M'I\le B\le MI$. Then 
\begin{equation}\label{04}
{{\Phi }^{2}}\left( \frac{A+B}{2} \right)\le \frac{{{K}^{2}}\left( h \right)}{{{\left( 1+\frac{{{\left( \log \frac{M'}{m'} \right)}^{2}}}{8} \right)}^{2}}}{{\Phi }^{2}}\left( A\#B \right),
\end{equation}
and
\begin{equation}\label{015}
{{\Phi }^{2}}\left( \frac{A+B}{2} \right)\le \frac{{{K}^{2}}\left( h \right)}{{{\left( 1+\frac{{{\left( \log \frac{M'}{m'} \right)}^{2}}}{8} \right)}^{2}}}{{\left( \Phi \left( A \right)\#\Phi \left( B \right) \right)}^{2}},
\end{equation}
where $h=\frac{M}{m}$.
\end{theorem}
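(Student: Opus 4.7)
The plan is to follow Lin's original strategy for \eqref{08} and \eqref{09}, but with the ordinary operator AM--GM inequality used there replaced by the sharpened version supplied by Lemma \ref{14}. Writing $\delta := (\log(M'/m'))^{2}/8$, the refinement factor $1+\delta$ will enter through a tighter lower bound on the inverse arithmetic mean $(A^{-1}+B^{-1})/2$ and then propagate linearly into the Kantorovich constant.

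The first step is to establish the operator inequality
\[
\frac{A+B}{2} + Mm(1+\delta)\,(A\#B)^{-1} \le (M+m)\,I.
\]
Two ingredients are combined. Since the operators in question commute with $A$ and with $A^{-1}$, the sandwich $mI\le A\le MI$ gives $(A-mI)(MI-A)\ge 0$, which rearranges to $A + Mm\,A^{-1}\le (M+m)I$; the analogous bound for $B$ yields, after averaging,
\[
\frac{A+B}{2} + Mm\cdot\frac{A^{-1}+B^{-1}}{2} \le (M+m)\,I.
\]
Next, the hypotheses $A\le m'I$ and $B\ge M'I$ force $A^{-1}\ge (M'/m')\,B^{-1}$ with $M'/m'>1$, so applying Lemma \ref{14} to the pair $(B^{-1},A^{-1})$ together with the identity $A^{-1}\#B^{-1}=(A\#B)^{-1}$ produces the refined inequality
\[
(1+\delta)\,(A\#B)^{-1} \le \frac{A^{-1}+B^{-1}}{2}.
\]
Substituting this lower bound into the previous display proves the key estimate.

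Next I apply $\Phi$ and invoke the Choi--Davis inequality $\Phi((A\#B)^{-1}) \ge \Phi(A\#B)^{-1}$ (valid since $t\mapsto t^{-1}$ is operator convex), obtaining
\[
\Phi\!\left(\frac{A+B}{2}\right) + Mm(1+\delta)\,\Phi(A\#B)^{-1} \le (M+m)\,I.
\]
Both summands on the left are positive, so the Bhatia--Kittaneh-type bound $\|ST\|\le \tfrac{1}{4}\|S+T\|^{2}$ for positive $S,T$ gives
\[
\left\|\Phi\!\left(\frac{A+B}{2}\right)\Phi(A\#B)^{-1}\right\| \le \frac{(M+m)^{2}}{4Mm(1+\delta)} = \frac{K(h)}{1+\delta}.
\]
Since $\|PQ^{-1}\|\le c$ is equivalent to $P^{2}\le c^{2}Q^{2}$ for positive invertible $P,Q$, this is precisely \eqref{04}. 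For \eqref{015}, observe that $\Phi$ being positive and unital preserves the sandwich, i.e.\ $mI\le\Phi(A)\le m'I\le M'I\le\Phi(B)\le MI$; applying \eqref{04} with the identity map in place of $\Phi$ and with the operators $\Phi(A),\Phi(B)$ in place of $A,B$, and using $\Phi((A+B)/2)=(\Phi(A)+\Phi(B))/2$, yields \eqref{015}.

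The main technical obstacle is the correct deployment of Lemma \ref{14} in the inverse setting: its hypothesis requires the lower ratio to exceed $1$, so one must swap $A^{-1}$ and $B^{-1}$ so that $B^{-1}$ plays the role of the smaller operator, and one must verify that no refinement factor is lost when passing through the order-reversing operation of inversion and the identity $A^{-1}\#B^{-1}=(A\#B)^{-1}$.
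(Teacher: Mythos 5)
Your proposal is correct and follows essentially the same route as the paper: the same key estimate $\frac{A+B}{2}+Mm\left(1+\frac{\left(\log \frac{M'}{m'}\right)^{2}}{8}\right)\left(A\#B\right)^{-1}\le (M+m)I$ obtained from the quadratic sandwich bounds for $A$ and $B$ together with Lemma \ref{14} applied to the inverted pair, followed by Choi's inequality, the Bhatia--Kittaneh bound $\left\| ST \right\|\le \frac{1}{4}\left\| S+T \right\|^{2}$, and the standard equivalence of the resulting norm bound with the squared operator inequality \eqref{04}. The only real difference is in \eqref{015}, where the paper just says the proof is analogous while you give an explicit reduction (apply \eqref{04} with the identity map to $\Phi(A),\Phi(B)$, noting that a unital positive $\Phi$ preserves the sandwich conditions), which is a clean and valid way to handle that case.
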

\begin{proof}
 We intend to prove
\begin{equation}\label{07}
\frac{A+B}{2}+Mm\left( 1+\frac{{{\left( \log \frac{M'}{m'} \right)}^{2}}}{8} \right){{\left( A\#B \right)}^{-1}}\le (M+m)I.
\end{equation}
According to the hypothesis we have
\[\frac{1}{2}\left( MI-A \right)\left( mI-A \right){{A}^{-1}}\le 0,\]
by easy computation we find that
\begin{equation}\label{016}
\frac{A}{2}+Mm\frac{{{A}^{-1}}}{2}\le \left( \frac{M+m}{2} \right)I
\end{equation}
and similar argument shows that
\begin{equation}\label{017}
\frac{B}{2}+Mm\frac{{{B}^{-1}}}{2}\le \left( \frac{M+m}{2} \right)I.
\end{equation}
Summing up \eqref{016} and \eqref{017}, we get
\begin{equation}\label{050}
\frac{A+B}{2}+Mm\frac{{{A}^{-1}}+{{B}^{-1}}}{2}\le \left( M+m \right)I.
\end{equation}
Whence
\[\begin{aligned}
  & \frac{A+B}{2}+Mm\left( 1+\frac{{{\left( \log \frac{M'}{m'} \right)}^{2}}}{8} \right){{\left( A\#B \right)}^{-1}} \\ 
 &\quad =\frac{A+B}{2}+Mm\left( 1+\frac{{{\left( \log \frac{M'}{m'} \right)}^{2}}}{8} \right)\left( {{A}^{-1}}\#{{B}^{-1}} \right) \\ 
 &\quad \le \frac{A+B}{2}+Mm\frac{{{A}^{-1}}+{{B}^{-1}}}{2} \quad \text{(by \eqref{13})}\\ 
 &\quad \le \left( M+m \right)I \quad \text{(by \eqref{050})}.\\ 
\end{aligned}\]
Therefore the inequality \eqref{07} is established.

Now we try to prove \eqref{04} by using the above inequality. It is not hard to see that, inequality \eqref{04} is equivalent with
\begin{equation}\label{06}
\left\| \Phi \left( \frac{A+B}{2} \right){{\Phi }^{-1}}\left( A\#B \right) \right\|\le \frac{{{\left( M+m \right)}^{2}}}{4Mm\left( 1+\frac{{{\left( \log \frac{M'}{m'} \right)}^{2}}}{8} \right)}.
\end{equation}
On the other hand, it is well known that for $A,B\ge 0$ (see {{\cite[Theorem 1]{01}}}),
\begin{equation*}
\left\| AB \right\|\le \frac{1}{4}{{\left\| A+B \right\|}^{2}}.
\end{equation*}
So, in order to prove \eqref{06} we need to show
\begin{equation}\label{05}
\Phi \left( \frac{A+B}{2} \right)+Mm\left( 1+\frac{{{\left( \log \frac{M'}{m'} \right)}^{2}}}{8} \right){{\Phi }^{-1}}\left( A\#B \right)\le (M+m)I.
\end{equation}
Besides, from the Choi's inequality {{\cite[p. 41]{02}}} we know that for any $T>0$,
\[{{\Phi }^{-1}}\left( T \right)\le \Phi \left( {{T}^{-1}} \right).\]
Therefore we prove the much stronger statement \eqref{05}, i.e.,
\begin{equation}\label{010}
\Phi \left( \frac{A+B}{2} \right)+Mm\left( 1+\frac{{{\left( \log \frac{M'}{m'} \right)}^{2}}}{8} \right)\Phi \left( {{\left( A\#B \right)}^{-1}} \right)\le (M+m)I.
\end{equation}
Using linearity of $\Phi $ and inequality \eqref{07}, we can easily obtain desired result \eqref{04}.

Remain inequality \eqref{015} can be proved analogously.
\end{proof}
\vskip 0.2 true cm
As is known to all, the Wielandt Inequality  {{\cite[p.443]{07}}} states that if $0<mI\le A\le MI$, and $x,y\in \mathcal{H}$ with $x\bot y$, then 
\[{{\left| \left\langle x,Ay \right\rangle \right|}^{2}}\le {{\left( \frac{M-m}{M+m} \right)}^{2}}\left\langle x,Ay \right\rangle \left\langle y,Ay \right\rangle .\]
In 2000, Bhatia and Davis \cite{08} proved an operator Wielandt's inequality which states that if $0<mI\le A\le MI$ and $X,Y$ are two partial isometries on $\mathcal{H}$ whose final spaces are orthogonal to each other, then for every 2-positive linear map $\Phi $ on $\mathbb{B}\left( \mathcal{H} \right)$,
\begin{equation}\label{013}
\Phi \left( {{X}^{*}}AY \right){{\Phi }^{-1}}\left( {{Y}^{*}}AY \right){{\Phi }^{-1}}\left( {{Y}^{*}}AX \right)\le {{\left( \frac{M-m}{M+m} \right)}^{2}}\Phi \left( {{X}^{*}}AX \right).
\end{equation}
Lin {{\cite[Conjecture 3.4]{05}}}, conjectured that the following assertion could be true:
\begin{equation}\label{011}
\left\| \Phi \left( {{X}^{*}}AY \right){{\Phi }^{-1}}\left( {{Y}^{*}}AY \right){{\Phi }^{-1}}\left( {{Y}^{*}}AX \right){{\Phi }^{-1}}\left( {{X}^{*}}AX \right) \right\|\le {{\left( \frac{M-m}{M+m} \right)}^{2}}.
\end{equation}
Recently, Fu and He \cite{011} attempt to solve the conjecture and get a step closer to the conjecture. But Gumus \cite{09} obtain a better upper bound to approximate the right side of \eqref{011} based on
\begin{equation}\label{012}
\left\| \Phi \left( {{X}^{*}}AY \right){{\Phi }^{-1}}\left( {{Y}^{*}}AY \right){{\Phi }^{-1}}\left( {{Y}^{*}}AX \right){{\Phi }^{-1}}\left( {{X}^{*}}AX \right) \right\|\le \frac{{{\left( M-m \right)}^{2}}}{2\sqrt{Mm}\left( M+m \right)}.
\end{equation}
The remainder of this paper presents an improvement for the operator version of Wielandt inequality. \begin{theorem}\label{0tb}
Let $0<mI\le m'{{A}^{-1}}\le A\le MI$ and $1<m'$ and let $X$ and $Y$ be two isometries such that ${{X}^{*}}Y=0$. For every 2-positive linear map $\Phi $,
$$\left\| \Phi \left( {{X}^{*}}AY \right){{\Phi }^{^{-1}}}\left( {{Y}^{*}}AY \right){{\Phi }^{^{-1}}}\left( {{Y}^{*}}AX \right){{\Phi }^{-1}}\left( {{X}^{*}}AX \right) \right\|\le \frac{{{\left( M-m \right)}^{2}}}{2\sqrt{Mm}\left( M+m \right)\left( 1+\frac{{{\left( \log m' \right)}^{2}}}{8} \right)}.$$
\end{theorem}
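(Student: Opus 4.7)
The plan is to follow the strategy of Gumus \cite{09}, with the crucial modification that where Gumus invokes the Nakamoto--Nakamura inequality \eqref{22}, I will invoke an analogue of its refinement \eqref{33} adapted to the present hypotheses. I first observe that $P := \Phi(X^{*}AY)\Phi^{-1}(Y^{*}AY)\Phi(Y^{*}AX)$ is positive (having the form $WR^{-1}W^{*}$ with $W=\Phi(X^{*}AY)$ and $R=\Phi(Y^{*}AY)>0$), that $Q := \Phi^{-1}(X^{*}AX)$ is positive, and thus that the task is to estimate $\|PQ\|$ for positive $P$ and $Q$.

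The next step will apply the operator Wielandt inequality \eqref{013} of Bhatia--Davis to obtain $P\le\left(\frac{M-m}{M+m}\right)^{2}\Phi(X^{*}AX)$, which I will combine with the norm inequality $\|PQ\|\le\tfrac14\|P+Q\|^{2}$ for positive operators from \cite[Theorem 1]{01} (the same workhorse used in the proof of Theorem \ref{lin}). This reduces the problem to bounding $\left\|\alpha\Phi(X^{*}AX)+\Phi^{-1}(X^{*}AX)\right\|$ with $\alpha=\left(\frac{M-m}{M+m}\right)^{2}$.

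The critical juncture will be the estimation of this sum. In Gumus's argument, the Nakamoto--Nakamura inequality \eqref{22} is invoked precisely here, producing the constant $\frac{M+m}{2\sqrt{Mm}}$ in the final bound. Under the present hypothesis $mI\le m'A^{-1}\le A\le MI$, which matches the hypothesis of Theorem \ref{32} with the roles of $A$ and $B$ interchanged (take Theorem \ref{32}'s $A$ to be the present $A^{-1}$ and its $B$ to be the present $A$), one obtains the analogue of \eqref{33}, namely $\Phi(A)\#\Phi(A^{-1}) \le \frac{M+m}{2\sqrt{Mm}(1+(\log m')^{2}/8)}$. Using this in place of \eqref{22} in Gumus's computation will insert the factor $(1+(\log m')^{2}/8)^{-1}$ into the final bound, yielding exactly $\frac{(M-m)^{2}}{2\sqrt{Mm}(M+m)(1+(\log m')^{2}/8)}$.

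The main obstacle will be the precise constant-chasing in Gumus's estimation: I must verify that the only step in which a Nakamoto--Nakamura-type constant appears is the one identified above, so that the single substitution of the refined constant from \eqref{33} for the classical one from \eqref{22} translates directly into the claimed improvement of the final bound.
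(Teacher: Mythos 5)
Your opening moves are fine: $P:=\Phi \left( {{X}^{*}}AY \right)\Phi^{-1}\left( {{Y}^{*}}AY \right)\Phi \left( {{Y}^{*}}AX \right)$ is positive and Bhatia--Davis \eqref{013} gives $P\le \alpha \,\Phi \left( {{X}^{*}}AX \right)$ with $\alpha ={{\left( \frac{M-m}{M+m} \right)}^{2}}$. But the critical step of your plan cannot close, for a quantitative reason. Once you pass through $\left\| PQ \right\|\le \frac{1}{4}{{\left\| P+Q \right\|}^{2}}$ with $Q={{\left( \Phi \left( {{X}^{*}}AX \right) \right)}^{-1}}$, the only information available is $P\le \alpha \Phi \left( {{X}^{*}}AX \right)$ and $mI\le \Phi \left( {{X}^{*}}AX \right)\le MI$, so the estimate this route produces is $\frac{1}{4}\max_{m\le t\le M}{{\left( \alpha t+{{t}^{-1}} \right)}^{2}}$; even after optimizing a rescaling $P\mapsto cP$, $Q\mapsto {{c}^{-1}}Q$ (which you do not mention), the best value is $\alpha \cdot \frac{{{\left( M+m \right)}^{2}}}{4Mm}=\frac{{{\left( M-m \right)}^{2}}}{4Mm}$, i.e.\ the Fu--He constant \cite{011}, which is strictly larger than even the unrefined target $\frac{{{\left( M-m \right)}^{2}}}{2\sqrt{Mm}\left( M+m \right)}$ of \eqref{012} whenever $M>m$ (for $M=4$, $m=1$: $9/16$ versus $9/20$, and without rescaling about $0.71$). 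So the Bhatia--Kittaneh reduction is provably too lossy to reach Gumus's bound, let alone the refined one. The proposed rescue --- inserting \eqref{33} in place of \eqref{22} ``at this juncture'' --- is not a legitimate step either: \eqref{33} is an \emph{upper} bound on the geometric mean $\Phi \left( A \right)\#\Phi \left( {{A}^{-1}} \right)$, which by AM--GM lies \emph{below} the arithmetic expression you need to control, so it cannot upper-bound $\left\| \alpha \Phi \left( {{X}^{*}}AX \right)+{{\Phi }^{-1}}\left( {{X}^{*}}AX \right) \right\|$; moreover ${{\Phi }^{-1}}\left( {{X}^{*}}AX \right)$ means ${{\left( \Phi \left( {{X}^{*}}AX \right) \right)}^{-1}}$, not $\Phi$ of an inverse, so \eqref{33} does not even address the right object. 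Your account of Gumus's argument is also off: the factor $\frac{M+m}{2\sqrt{Mm}}$ there does not come from Nakamoto--Nakamura \eqref{22} but from the order-preserving squaring lemma \eqref{23}, whose square root merely coincides with that constant.

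The idea missing from your proposal is exactly that squaring step, and it is what the paper uses. Rewrite \eqref{013} as ${{\left( \frac{M+m}{M-m} \right)}^{2}}P\le \Phi \left( {{X}^{*}}AX \right)$, note $mI\le \Phi \left( {{X}^{*}}AX \right)\le MI$, and apply Proposition \ref{17} (the refined version \eqref{25} of \eqref{23}) to the ordered pair to get ${{P}^{2}}\le \frac{{{\left( M-m \right)}^{4}}}{4Mm{{\left( M+m \right)}^{2}}{{\left( 1+\frac{{{\left( \log m' \right)}^{2}}}{8} \right)}^{2}}}{{\Phi }^{2}}\left( {{X}^{*}}AX \right)$. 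Since an operator inequality ${{P}^{2}}\le {{c}^{2}}{{B}^{2}}$ with $B>0$ is equivalent to $\left\| P{{B}^{-1}} \right\|\le c$, the stated norm bound follows immediately; no use of $\left\| PQ \right\|\le \frac{1}{4}{{\left\| P+Q \right\|}^{2}}$ and no appeal to \eqref{22} or \eqref{33} occurs. The refinement factor $1+\frac{{{\left( \log m' \right)}^{2}}}{8}$ enters solely through Proposition \ref{17}, and without a squaring lemma of this type your chain of inequalities cannot produce it.
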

\begin{proof}
From \eqref{013} we have that
\[{{\left( \frac{M+m}{M-m} \right)}^{2}}\Phi \left( {{X}^{*}}AY \right){{\Phi }^{^{-1}}}\left( {{Y}^{*}}AY \right){{\Phi }^{^{-1}}}\left( {{Y}^{*}}AX \right)\le \Phi \left( {{X}^{*}}AX \right).\]
Under given assumptions we have $m{{I}\le \Phi \left( {{X}^{*}}AX \right)\le M{{I}}}$. Hence Proposition \ref{17} implies 
\[{{\left( \Phi \left( {{X}^{*}}AY \right){{\Phi }^{^{-1}}}\left( {{Y}^{*}}AY \right){{\Phi }^{^{-1}}}\left( {{Y}^{*}}AX \right) \right)}^{2}}\le \frac{{{\left( M-m \right)}^{4}}}{4Mm{{\left( M+m \right)}^{2}}{{\left( 1+\frac{{{\left( \log m' \right)}^{2}}}{8} \right)}^{2}}}{{\Phi }^{2}}\left( {{X}^{*}}AX \right).\]
Therefore
\begin{equation}\label{020}
\left\| \Phi \left( {{X}^{*}}AY \right){{\Phi }^{^{-1}}}\left( {{Y}^{*}}AY \right){{\Phi }^{^{-1}}}\left( {{Y}^{*}}AX \right){{\Phi }^{-1}}\left( {{X}^{*}}AX \right) \right\|\le \frac{{{\left( M-m \right)}^{2}}}{2\sqrt{Mm}\left( M+m \right)\left( 1+\frac{{{\left( \log m' \right)}^{2}}}{8} \right)},
\end{equation}
which completes the proof. 
\end{proof}
Based on inequality \eqref{020}, we obtain a refinement of inequality \eqref{012}.

\bibliographystyle{alpha}

\begin{thebibliography}{9}
\bibitem{08}
R. Bhatia and C. Davis, {\it More operator versions of the Schwarz inequality}, Comm. Math. Phys., {\bf215} (2000), 239--244.

\bibitem{01}
R. Bhatia, F. Kittaneh, {\it Notes on matrix arithmetic-geometric mean inequalities}, Linear Algebra Appl., {\bf308} (2000), 203--211.

\bibitem{02}
R. Bhatia, {\it Positive definite matrices}, Princeton University Press, Princeton, 2007.

\bibitem{011}
X. Fu, C. He, {\it Some operator inequalities for positive linear maps}, Linear Multilinear Algebra., {\bf63}(3) (2015), 571--577.

\bibitem{010}
M. Fujii, S. Izumino, R. Nakamoto, Y. Seo, {\it Operator inequalities related to Cauchy-Schwarz and Holder-McCarthy inequalities}, Nihonkai Math. J., {\bf8} (1997), 117--122.

\bibitem{013}
T. Furuta, J. Mi\'ci\'c Hot, J. Pe\v cari\'c, Y. Seo, {\it Mond-Pe\v cari\'c method in operator inequalities}, Monographs in Inequalities 1, Element, Zagreb, 2005.

\bibitem{09}
I.H. Gumus, {\it A note on a conjecture about Wielandt's inequality}, Linear Multilinear Algebra., {\bf63}(9) (2015), 1909--1913.

\bibitem{07}
R.A. Horn, C.R. Johnson, {\it Matrix analysis}, London: Cambridge University Press, 1985.

\bibitem{05}
M. Lin, {\it On an operator Kantorovich inequality for positive linear maps}, J. Math. Anal., {\bf402} (2013), 127--132.

\bibitem{03}
M. Lin, {\it Squaring a reverse AM-GM inequality}, Studia Math., {\bf215} (2013), 187--194.

\bibitem{6}
LV. Kantorovi\v{c}, {\it Functional analysis and applied mathematics }(in Russian), Uspehi Matem. Nauk (N.S)., {\bf3} (1948), 89--185.

\bibitem{20}
M.S. Moslehian, R. Nakamoto, Y. Seo, {\it A Diaz-Metcalf type inequality for positive linear maps and its applications}, Electron. J. Linear Algebra., {\bf22} (2011), 179--190.

\bibitem{12}
B. Mond, J. Pe\v{c}ari\v{c}, {\it A matrix version of the Ky Fan generalization of the Kantorovich inequality}, Linear Multilinear Algebra., {\bf36} (1994), 217--221.

\bibitem{mond}
B. Mond and J. Pe\v{c}ari\v{c}, {\it Convex inequalities in Hilbert spaces}, Houston J. Math., {\bf19} (1993), 405--420.

\bibitem{9}
R. Nakamoto, M. Nakamura, {\it Operator mean and Kantorovich inequality}, Math. Japon., {\bf44}(3) (1996), 495--498.

\bibitem{06}
J. Yang, X. Fu, {\it Squaring operator $\alpha $-geometric mean inequality}, J. Math. Inequal., {\bf10}(2) (2016), 571--575.

\bibitem{012}
L. Zou, Y. Jiang, {\it Improved arithmetic-geometric mean inequality and its application}, J. Math. Inequal., {\bf9} (2015), 107--111.
\end{thebibliography}

\vskip 0.4 true cm

\tiny$^1$Department of Mathematics, Mashhad Branch, Islamic Azad University, Mashhad, Iran.

{\it E-mail address:} hrmoradi@mshdiau.ac.ir
\vskip 0.4 true cm

$^2$Department of Mathematics, Mashhad Branch, Islamic Azad University, Mashhad, Iran.

{\it E-mail address:} erfanian@mshdiau.ac.ir
\end{document}